\documentclass[10pt,a4paper]{amsart}
\usepackage[english]{babel}
\usepackage{amssymb,xypic,amscd,amsthm,stmaryrd, mathtools, amsmath}

\CompileMatrices
\usepackage[T1]{fontenc}
\newtheorem{defn}{Definition}
\newtheorem{thm}[defn]{Theorem}
\newtheorem{cor}[defn]{Corollary}
\newtheorem{lem}[defn]{Lemma}
\newtheorem{prop}[defn]{Proposition}

\theoremstyle{plain}
\newtheorem{rem}[defn]{Remark}
\theoremstyle{remark}
\newtheorem{exam}{Example}
\numberwithin{equation}{section} \numberwithin{defn}{section}
\usepackage{multicol}
\usepackage{float}
\usepackage{xfrac}
\usepackage{faktor}

\usepackage{tikz-cd}

\newcommand\ed{\operatorname{End}}
\newcommand{\Img}{\operatorname{Im}}

\newcommand{\Coker}{\operatorname{Coker}}
\newcommand\Ker{\operatorname{Ker}}
\newcommand\aut{\operatorname{Aut}}

\newcommand{\f}{\varphi}

\newcommand{\N}{\mathbb{N}}

\newcommand\Id{\operatorname{Id}}
\begin{document}

\title{ k[x]-modules and Core-Nilpotent endomorphisms}

\author{ Diego Alba Alonso* }
\author{Javier Sánchez González**}

\address{Departamento de Matem\'aticas, ETSII, Universidad de Castilla-La Mancha, 13071 Ciudad Real, Spain}
 \email{ (*) Diego.Alba@uclm.es}
 \email{ (**) Javier.SGonzalez@uclm.es}

\thanks{This work was supported by {\it Agencia Estatal de Investigación} (Spain) through grant PID2023-151823NB-I00.}

\begin{abstract}
Core-nilpotent endomorphisms over an arbitrary vector space form the largest subset of the ring of endomorphisms over that arbitrary vector space which admit a decomposition as sum of two endomorphisms satisfying the analogous properties as the well known core-nilpotent decomposition of matrices. In this paper we present a new description of core-nilpotent endomorphisms using the $k[x]-$module structure they define in the base vector space. Moreover, our approach provides us with a ``new'' generalized inverse that restricts to the well known Drazin inverse under certain conditions. Similarly, we present a generalized core-nilpotent decomposition for endomorphisms over arbitrary vector spaces.
\end{abstract}

\maketitle

\bigskip

\setcounter{tocdepth}1

\tableofcontents
\bigskip

\medskip

\textbf{Mathematical Subject Classification}: 13C99, 13E99, 15A03, 15A04, 15A09.\\ \bigskip 

\textbf{Keywords}: Core-Nilpotent endomorphisms, Core-Nilpotent decomposition,\\ $k[x]-$module, Index, Drazin Inverse, Localization.
\section{Introduction.}

Let us consider a square matrix $A$ with entries in an arbitrary field $k.$ If we denote by $\mathrm{rk (A)}$ the rank of the matrix $A,$ the index $A,$ which is denoted by $i(A);$ is the smallest integer satisfying that $\mathrm{rk (A^{i(A)})}=\mathrm{rk (A^{i(A)+1})}.$ In \cite[Theorem 2.2.21]{Ind}, it was shown that any square matrix $A$ can be written as the sum of two other matrices which are usually denoted as $A_1$ and $A_2$ such that: \begin{itemize}
\item $i(A_1)\leq 1;$
\item $A_2$ is nilpotent;
\item $A_1\cdot A_2=0=A_2\cdot A_1.$
\end{itemize}
This decomposition is unique and it is named as the ``core-nilpotent'' decomposition of matrix $A.$ Matrices $A_1$ and $A_2$ are called the core and nilpotent parts of $A$ respectively. Applications of the core-nilpotent decomposition vary from solving differential-algebraic equations, to the study of generalized inverses and matrix partial orders, analysing matrices associated to graphs (like the adjacency matrix of a tree) and there are even generalizations of this decomposition such as the generalized core-nilpotent decomposition. See, among others, \cite{Gen CN}, \cite{Ind} and \cite{Graphs}. \\
In 2021, \cite{Fpa-CN}, a natural problem was studied related to this decomposition. Precisely, a notion of index that generalizes the index of a finite square matrix was introduced to define core-nilpotent endomorphisms on arbitrary vector spaces. These are the largest class of endomorphisms that admit a decomposition which is equivalent to the homonym one for the case of square matrices. In particular, all endomorphisms that have Drazin inverse were characterized.

In this paper, we propose a new algebraic formulation of this problem, based on localization of $k[x]$-modules, which naturally leads to the notion of core-nilpotent endomorphism and, in fact, to the slightly more general notion of \textit{pointwise core-nilpotent endomorphism}. Our approach clarifies the exact role that each hypothesis plays in the theory as far as existence and uniqueness of the decomposition are concerned. In the process, we also introduce a generalization of the Drazin inverse whose existence characterizes pointwise core-nilpotent endomorphisms and that restricts to the ordinary Drazin inverse in the core-nilpotent case. Both of these ``pointwise'' notions relax the nilpotency conditions seen in the classical literature, whicn allows us to treat well known operators ---such as the formal derivative operator for polynomials--- within the boundaries of our theory.

\medskip

\section{Preliminaries} \label{s:pre}
This section is included for the sake of completeness.\\

Let $k$ be a field, let $V$ be an arbitrary $k-$vector space (in general, infinite dimensional) and let $\f\colon V\to V$ be an endomorphism.

\begin{defn}\cite[Definition 3.1]{Fpa-CN}\label{D: index0}
An endomorphism $\f \in \ed_k(V)$ has index $0$ when it is an automorphism, $\f \in \aut_k(V).$
\end{defn}

\begin{defn}\cite[Definition 3.2]{Fpa-CN}\label{D: Index}
We will call index of an endomorphism $\f \in \ed_k(V)$ to the smallest positive integer $m$ for which $\Ker(\f^m)=\Ker(\f^{m+1})$ and $\Img(\f^m)=\Img(\f^{m+1}).$
\end{defn}

\begin{defn}\cite[Definition 3.4]{Fpa-CN}\label{D: CN}
We say that an endomorphism $\f \in \ed_k(V)$ is core-nilpotent (or it is a CN-endomorphism) when there exists two endomorphisms $\f_1,\f_2 \in \ed_k(V)$ such that: \begin{itemize}
\item $\f=\f_1+\f_2;$
\item $i(\f_1)\leq 1;$
\item $\f_2$ is nilpotent;
\item $\f_1\circ \f_2=0=\f_2\circ \f_1.$
\end{itemize}
\end{defn}

\begin{thm}\cite[Theorem 3.6]{Fpa-CN}\label{T: Charact CN endos FP}
If $\f \in \ed_k(V)$ then, the following conditions are equivalent: \begin{itemize}
\item $\f$ is a CN-endomorphism.
\item $\Ker(\f^m)=\Ker(\f^{m+1})$ and $\Img(\f^m)=\Img(\f^{m+1})$ for certain $m\in \N .$
\item $V=\Ker(\f^m)\oplus \Img(\f^m)$ for a certain $m\in \N .$
\item There exists a unique decomposition $V=W_{\f}\oplus U_{\f},$ where $W_{\f}$ and $U_{\f}$ are $\f$-invariant subspaces of $V,$ $\f_{_{\vert W_{\f}}}\in \aut_k(W_{\f})$ and $\f_{\vert U_{\f}}$ is nilpotent.
\end{itemize}
\end{thm}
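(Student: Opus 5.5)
I will prove the four conditions equivalent by the cycle (1)$\Rightarrow$(2)$\Rightarrow$(3)$\Rightarrow$(4)$\Rightarrow$(1). Throughout, only elementary linear algebra of kernels and images of powers is used.

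\emph{(1)$\Rightarrow$(2).} Let $\f=\f_1+\f_2$ be as in Definition \ref{D: CN}, with $\f_2^n=0$. Since $\f_1$ and $\f_2$ commute and $\f_1\f_2=0$, the binomial expansion gives $\f^m=\f_1^m+\f_2^m$ for every $m\geq 1$, so $\f^m=\f_1^m$ for $m\geq n$. Since $i(\f_1)\leq 1$, we have $\Ker(\f_1)=\Ker(\f_1^{j})$ and $\Img(\f_1)=\Img(\f_1^j)$ for all $j\geq 1$. (For index $0$ this is trivial; for index $1$ it follows by induction from $\Ker\f_1=\Ker\f_1^2$ and $\Img\f_1=\Img\f_1^2$.) Hence $\Ker(\f^m)=\Ker(\f^{m+1})$ and $\Img(\f^m)=\Img(\f^{m+1})$ for $m=\max(n,1)$.

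\emph{(2)$\Rightarrow$(3).} From the two equalities one gets by induction $\Ker\f^m=\Ker\f^{2m}$ and $\Img\f^m=\Img\f^{2m}$.
\begin{itemize}
\item \emph{Trivial intersection.} If $v=\f^m(w)\in\Ker\f^m$, then $w\in\Ker\f^{2m}=\Ker\f^m$, so $v=0$.
\item \emph{Spanning.} Given $v\in V$, we have $\f^m(v)\in\Img\f^{2m}$, so $\f^m(v)=\f^{2m}(u)$ for some $u$. Then $v=\big(v-\f^m(u)\big)+\f^m(u)$, where the first summand lies in $\Ker\f^m$ and the second in $\Img\f^m$.
\end{itemize}

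\emph{(3)$\Rightarrow$(4).} Put $W_\f=\Img\f^m$ and $U_\f=\Ker\f^m$. Both are $\f$-invariant, and $\f^m|_{U_\f}=0$. For the automorphism claim on $W_\f$:
\begin{itemize}
\item \emph{Injectivity.} $\Ker\f\cap W_\f\subseteq\Ker\f^m\cap\Img\f^m=0$.
\item \emph{Surjectivity.} Write $v\in V$ as $v=u+w$ with $u\in U_\f$, $w\in W_\f$. Then $\f^m(v)=\f^m(w)$, so $\Img\f^m=\f^m(W_\f)$. Hence $\f^m|_{W_\f}$ is onto, and therefore so is $\f|_{W_\f}$.
\end{itemize}

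\emph{Uniqueness in (4).} This is the main obstacle, since the competing decomposition $V=W'\oplus U'$ need not a priori have the same nilpotency exponent. Suppose $V=W'\oplus U'$ with both summands invariant, $\f|_{W'}$ bijective and $(\f|_{U'})^r=0$. Take $N\geq m,r$. Then $\Ker\f^N=U'$: indeed, $\f^N$ is injective on $W'$, kills $U'$, and respects the direct sum. Likewise $\Img\f^N=\f^N(W')=W'$. The same computation applied to $(W_\f,U_\f)$ gives $\Ker\f^N=U_\f$ and $\Img\f^N=W_\f$. Comparing, $U'=U_\f$ and $W'=W_\f$.

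\emph{(4)$\Rightarrow$(1).} Let $\pi_W$ and $\pi_U$ be the projections associated with $V=W_\f\oplus U_\f$, and set $\f_1=\f\pi_W$ and $\f_2=\f\pi_U$. Since both summands are $\f$-invariant, $\f$ commutes with these projections.
\begin{itemize}
\item $\f=\f_1+\f_2$.
\item $\f_1\f_2=\f\pi_W\f\pi_U=\f^2\pi_W\pi_U=0$, and symmetrically $\f_2\f_1=0$.
\item $\f_2$ is nilpotent, because $\f|_{U_\f}$ is.
\item $i(\f_1)\leq 1$: we have $\Ker\f_1=U_\f$ and $\Img\f_1=W_\f$, and both are stable under squaring since $\f|_{W_\f}$ is an automorphism.
\end{itemize}
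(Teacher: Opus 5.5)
Your proof is correct and complete. One convention should be stated: read $m\geq 1$ in the third condition. For $m=0$ that condition holds for every $\f$, and your injectivity step $\Ker\f\cap\Img\f^m\subseteq\Ker\f^m\cap\Img\f^m$ needs $m\geq 1$. This is clearly the intended reading.

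The paper itself gives no proof to compare with. The theorem is quoted from \cite{Fpa-CN} in the preliminaries, and Theorem \ref{T: CaractViaLocCN} explicitly defers every equivalence other than II) to that reference. Your elementary argument in fact supplies what the localization approach takes for granted. Lemma \ref{L: C-N} identifies $\phi\colon V\to V_x$ with $\f^m\colon V\to\Img\f^m$, and for that it needs $\Img\f^{2m}=\Img\f^m$ and the bijectivity of $\f$ on $\Img\f^m$. These are exactly your steps (2)$\Rightarrow$(3) and (3)$\Rightarrow$(4).

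The real difference is in uniqueness. You characterize both summands intrinsically: $U=\Ker\f^N$ and $W=\Img\f^N$ for $N$ beyond every nilpotency exponent. The paper (Theorem \ref{T: general}) instead pins down $U$ as $\Ker(\phi)=\bigcup_n\Ker\f^n$ and then compares $\f$-invariant complements of $U$.

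Your use of a uniform exponent is essential, not a convenience: with only pointwise nilpotency, $U$ does not determine the complement. Take $\f$ to be multiplication by $x$ on $V=k[x,x^{-1}]/k[x]\oplus k[x,x^{-1}]$. Then $k[x,x^{-1}]/k[x]\oplus 0$ is $\Ker(\phi)$. Both $0\oplus k[x,x^{-1}]$ and the graph $\{(\pi(w),w)\}$ of the quotient map $\pi\colon k[x,x^{-1}]\to k[x,x^{-1}]/k[x]$ are $\f$-invariant complements on which $\f$ is bijective. In the core-nilpotent case this cannot happen. Any $k[x]$-linear map $h$ from a module on which $x$ acts bijectively to one annihilated by $x^r$ vanishes, since $h(x^r w)=x^r h(w)=0$. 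That is the same mechanism your $\Ker\f^N$, $\Img\f^N$ comparison exploits.
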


\begin{lem}\cite[Lemma 3.13]{Fpa-CN}\label{L: IndexSiiCN}
The index of an endomorphism $\f \in \ed_k(V)$ exists if and only if $\f$ is a CN-endomorphism.
\end{lem}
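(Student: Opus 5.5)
The plan is to reduce Lemma \ref{L: IndexSiiCN} to Theorem \ref{T: Charact CN endos FP}, whose second condition is almost verbatim the defining property of the index. The only gap is between ``some $m\in\N$'' and ``a smallest positive integer'', which is where index $0$ and well-ordering come in.

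For the direction ``index exists $\Rightarrow$ CN'', suppose $i(\f)=m$ exists. If $m=0$ in the sense of Definition \ref{D: index0}, then $\f$ is an automorphism. In that case take $\f_1=\f$ and $\f_2=0$. Then $i(\f_1)=0\le 1$, $\f_2$ is nilpotent and both composites vanish, so $\f$ is CN by Definition \ref{D: CN}. Otherwise $m$ is a positive integer with $\Ker(\f^m)=\Ker(\f^{m+1})$ and $\Img(\f^m)=\Img(\f^{m+1})$. This is exactly the second condition of Theorem \ref{T: Charact CN endos FP}, so $\f$ is CN.

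For the converse, assume $\f$ is CN. By Theorem \ref{T: Charact CN endos FP} there is some $m\in\N$ with $\Ker(\f^m)=\Ker(\f^{m+1})$ and $\Img(\f^m)=\Img(\f^{m+1})$. We need a \emph{positive} such integer.
\begin{itemize}
\item If $m\ge 1$, the set of positive integers with the stabilization property is nonempty.
\item If $m=0$ is allowed, the condition reads $\Ker(\f)=0$ and $\Img(\f)=V$, so $\f$ is an automorphism. Then $\f^n$ is an automorphism for every $n$, so $m=1$ also works. In this case the index is $0$ in the convention of Definition \ref{D: index0} anyway.
\end{itemize}
By well-ordering of the positive integers, the set has a least element, and that least element is $i(\f)$. (Once the chains stabilize at $m$ they stay stabilized for all larger $n$, since $\f^{n+1}(V)=\f^{n-m}(\f^{m+1}(V))=\f^n(V)$, and similarly for kernels. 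This is not even needed for the existence of the minimum.)

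The only real obstacle is bookkeeping. We have to handle the automorphism case consistently with the two definitions of index: index $0$ for automorphisms, and otherwise the least positive integer. Beyond that, the lemma is a direct restatement of the equivalence of the first two conditions in Theorem \ref{T: Charact CN endos FP}.
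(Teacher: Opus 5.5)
Your argument is correct. The paper gives no proof of this lemma; it is quoted from \cite{Fpa-CN}. The natural derivation is the one you give: read off the equivalence of the first two conditions of Theorem~\ref{T: Charact CN endos FP}, and use well-ordering to pass from ``some $m$'' to a least positive $m$, treating the automorphism case (index $0$ in Definition~\ref{D: index0}, and $m=0$ forcing $\f$ to be an automorphism) separately.
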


Given a matrix $A\in \text{Mat}_{n\times n}(k),$ M.P Drazin proved, in \cite{Dra}, the existence of an unique matrix that he denoted $A^D\in \text{Mat}_{n\times n}(k)$ which later was named in his honour as Drazin inverse, satisfying the following equations: \begin{align}
\begin{split}
A^D\cdot A\cdot A^D & = A^D;\\
A^D\cdot A & = A\cdot A^D;\\
A^{m+1}\cdot A^D & = A^m, \text{ for }m=i(A).
\end{split}
\end{align}
S.L. Campbell, in 1976, approached firstly the study of the Drazin inverse for ``infinite-dimensional'' complex matrices in \cite{Camp}. Considering an arbitrary $k-$vector space and a finite potent endomorphism, F. Pablos Romo proved the existence and uniqueness of the Drazin inverse of a core-nilpotent endomorphism (which is defined in an analogous way as in the matrix case) in \cite[Proposition 4.2]{Fpa-CN}. Moreover, the following result was proven:

\begin{thm}\cite[Theorem 4.8]{Fpa-CN}\label{T: DrazinSiiCN} An endomorphism $\f \in \ed_k(V)$ has a Drazin inverse if and only if $\f$ is a core-nilpotent endomorphism.
\end{thm}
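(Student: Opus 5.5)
We prove both implications, using the structural characterization in Theorem \ref{T: Charact CN endos FP}. By a Drazin inverse of $\f$ we mean $\f^D\in\ed_k(V)$ with
\[
\f^D\circ\f\circ\f^D=\f^D,\qquad \f^D\circ\f=\f\circ\f^D,\qquad \f^{m+1}\circ\f^D=\f^m
\]
for some $m\in\N$. In the infinite-dimensional setting we take $m$ to be any integer rather than insisting a priori that $m=i(\f)$; by Lemma \ref{L: IndexSiiCN} the index need not exist.

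\textbf{CN implies Drazin.} Suppose $\f$ is a CN-endomorphism. Theorem \ref{T: Charact CN endos FP} gives $V=W_\f\oplus U_\f$, where both summands are $\f$-invariant, $\f_{\vert W_\f}$ is an automorphism and $\f_{\vert U_\f}$ is nilpotent, say of order $m$. Define $\f^D$ as $(\f_{\vert W_\f})^{-1}$ on $W_\f$ and as $0$ on $U_\f$. We check the three identities separately on each summand.
\begin{itemize}
\item On $W_\f$, all three hold because $\f^D$ is the honest inverse there.
\item On $U_\f$, both sides of the first two identities vanish.
\item For the third, $\f^{m+1}\circ\f^D=0$ on $U_\f$, and $\f^m=0$ there by nilpotency.
\end{itemize}
Since $V=W_\f\oplus U_\f$, the identities hold on $V$.

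\textbf{Drazin implies CN.} Suppose $\psi=\f^D$ satisfies the identities with some $m$. Set $p=\f\circ\psi=\psi\circ\f$. The first identity gives $p^2=\f\psi\f\psi=\f\psi=p$, so $p$ is an idempotent commuting with $\f$. Hence $V=\Img p\oplus\Ker p$, and both summands are $\f$-invariant.
\begin{itemize}
\item \emph{$\f$ is invertible on $\Img p$.} Here $p$ acts as the identity. Moreover $\psi$ maps $\Img p$ into itself, because $\psi p=\psi\f\psi=\psi$, so $\Img\psi\subseteq\Img p$. Therefore $\f\psi=\psi\f=\Id$ on $\Img p$.
\item \emph{$\f$ is nilpotent on $\Ker p$.} Write the third identity as $\f^m p=\f^{m+1}\psi=\f^m$. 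Then for $v\in\Ker p$ we get $\f^m v=\f^m p v=0$.
\end{itemize}
This gives a decomposition of the type in the fourth condition of Theorem \ref{T: Charact CN endos FP}. Alternatively, $\f_1=\f p$ and $\f_2=\f(\Id-p)$ satisfy Definition \ref{D: CN} directly: $\f_1$ is an automorphism on $\Img p$ and zero on $\Ker p$, so its index is at most $1$. Either way, $\f$ is core-nilpotent.

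\textbf{Main obstacle.} The only delicate point is the meaning of $m$ in the third identity. If the definition requires $m=i(\f)$, the converse direction already presupposes that the index exists, and Lemma \ref{L: IndexSiiCN} then closes it immediately. Working with an arbitrary $m$ is the stronger statement, and it is handled by the idempotent argument above. We also need $\Img p$ and $\Ker p$ to be invariant under both $\f$ and $\psi$; this follows from commutativity, since $\psi$ commutes with $\f$ and hence with $p$. It is the point to check carefully.
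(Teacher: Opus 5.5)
Your proof is correct, but it does not follow the paper, because the paper never proves this statement. It is quoted from \cite{Fpa-CN}, and when it reappears as I)$\Leftrightarrow$V) in Theorem~\ref{T: CaractViaLocCN} it is again credited to \cite{Fpa-CN}.

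The closest in-paper material is the localization picture. For ``CN $\Rightarrow$ Drazin'', Lemma~\ref{L: C-N} identifies $\phi\colon V\to V_x$ with $\f^m\colon V\to\Img(\f^m)$. The section $\sigma$ then gives $\f^d=\sigma\circ\f_x^{-1}\circ\phi$, and the condition $\phi\circ\f\circ\f^d=\phi$ becomes $\f^{m+1}\circ\f^d=\f^m$. The map obtained is exactly yours: $(\f_{|W_\f})^{-1}$ on $W_\f$ and zero on $U_\f$.

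The converse is only implicit in the paper. A Drazin inverse is in particular a pointwise Drazin inverse, so the ``if'' part of Theorem~\ref{T: local drazin} applies. That proof runs on the same idempotent $\alpha\circ\f$ as your $p$, but under the weaker vector-dependent hypothesis. It therefore needs the Fifth Lemma and Theorem~\ref{T: general} to obtain the splitting and the surjectivity of $\phi$.

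Your argument is the uniform-exponent specialization of that proof with localization removed. The single $m$ gives $\f^m\circ p=\f^m$, hence nilpotency on $\Ker p$ at once. The paper's route buys the pointwise theory, where no uniform $m$ exists. It also lets one read $\f^D$ as the splitting of $0\to\Ker(\phi)\to V\to V_x\to 0$, which yields uniqueness. Yours is shorter and self-contained.

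Two small fixes:
\begin{itemize}
\item $\Img\psi\subseteq\Img p$ follows from $p\circ\psi=\psi$, not from $\psi\circ p=\psi$ as written. Both identities hold, since also $p=\psi\circ\f$.
\item If the definition insists on $m=i(\f)$, your forward direction still works, because $\f^{i(\f)}$ already vanishes on $U_\f$. The kernel chain stabilizing at $i(\f)$ forces the nilpotency order of $\f_{|U_\f}$ to be at most $i(\f)$.
\end{itemize}
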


\section{k[x]-modules and core-nilpotent endomorphisms}\label{S: kx-modules and cn endo}
Let $V$ denote an arbitrary $k-$vector space. For every linear operator $\f \colon V \to V$ one has the following two chains of subspaces (which are non-isomorphic): \begin{align}\label{eq 1a}
0\subseteq \Ker(\f)\subseteq \Ker(\f^2)\subseteq...\subseteq \Ker(\f^m)\subseteq...,
\end{align}
which, in virtue of the First Isomorphism Theorem, is equivalent to
\begin{align*}
V\overset{\f}{\to}\Img(\f)\overset{\f}{\to} \Img(\f^2)\overset{\f}{\to}...\overset{\f}{\to}\Img(\f^m)\overset{\f}{\to}...;
\end{align*}
and on the other hand
\begin{align}\label{eq 2a}
...\subseteq\Img(\f^m)\subseteq...\subseteq \Img(\f^2)\subseteq \Img(\f)\subseteq V,
\end{align}
which is equivalent to the following chain of quotients (by definition of cokernels):
\begin{align*}
...\to \Coker(\f^m)\to ...\to \Coker(\f^2)\to \Coker(\f)\to 0.
\end{align*}

\begin{rem}

We shall remark that the chains presented in \eqref{eq 1a} and \eqref{eq 2a}, necessarily stabilize when the vector space is of finite dimension. On the contrary, when the vector space is infinite dimensional, these chains may not stabilize, or in fact, one may stabilize and the other may not. Let us illustrate this fact with the following examples:

\end{rem}

\begin{exam}Let us consider the following vector space with infinite countable dimension $V=\underset{i\in \N}{\bigoplus}\langle v_i \rangle$ over an arbitrary vector space $k,$ and let us define $\f \colon V \to V$ by linearity from the expression:

$$\f(v_{i}) = \left \{ \begin{array}{ccl} 0 & \text{ if } & i=1 \\  v_{i-1} & \text{ if } & i\geq 2 \end{array} \right . .$$ Note that $\Img(\f^m)=V$ and $\Ker(\f^m)=\langle v_1,v_2,\dots , v_m \rangle $ for every $m\in \N .$ Therefore, the chain formed by the inclusions of the images, \eqref{eq 2a}, stabilizes for $m=1$ and the one formed by the kernels do not stabilize.\\ Furthermore, this example highlights a notable difference between linear algebra over infinite dimensional vector spaces and linear algebra over finite dimensional vector spaces (matrix theory), there exists endomorphisms that are surjective and are not injective (one can also find examples for the opposite). 
\end{exam}

\begin{exam}\label{E: ex 2}
Let us now include an example of an endomorphism over a $k-$vector space in which the chain presented in \eqref{eq 1a} stabilizes and another one in which it does not. Let $$\begin{array}{rccl}
\f_1 \colon & k[x]\oplus k[x]/x^n  & \rightarrow & k[x]\oplus k[x]/x^n \medskip \\
& (p(x),\overline{q(x)}) & \mapsto & (x\cdot p(x), \overline{x\cdot q(x)}).
\end{array} $$
Note that: \begin{itemize}
\item $\Ker(\f_1)=0\oplus \langle\overline{x^{n-1}} \rangle\subseteq k[x]\oplus k[x]/x^n;$
\item $\Ker(\f_1^2)=0\oplus \langle\overline{x^{n-2}}, \overline{x^{n-1}} \rangle\subseteq k[x]\oplus k[x]/x^n;$\\
\vdots 
\item $\Ker(\f_1^n)=0\oplus \langle k[x]/x^n \rangle\subseteq k[x]\oplus k[x]/x^n;$
\end{itemize}
and therefore the chain stabilizes and: \begin{equation}\label{eq: ejeunionnucleos1}
\underset{i=1}{\bigcup^n}\Ker(\f_1^i)=0\oplus k[x]/x^n.
\end{equation}

On the other hand, let us consider the following endomorphism: $$h_x \colon \underset{n\geq 1}{\bigoplus^{\infty}} (k[x]/x^n) \to \underset{n\geq 1}{\bigoplus^{\infty}} (k[x]/x^n),  $$ again, defined as the homothecy given by multiplication by $x$ at each component. A straightforward calculation shows that, setting $x^0=1$ and $x^j=0$ for any $j<0$: 
\begin{itemize}
\item $\Ker(h_x)=\bigoplus_{m\geq 0}\langle \overline{x^{m}}\rangle\subseteq \bigoplus_{n\geq 1}k[x]/x^n;$
\item $\Ker(h_x^2)=\bigoplus_{m\geq 0}\langle \overline{x^{m}}, \overline{x^{m-1}}\rangle\subseteq \bigoplus_{n\geq 1}k[x]/x^n;$\\
\vdots 
\item $\Ker(h_x^i)=\bigoplus_{m\geq 0}\langle \overline{x^{m}}, \overline{x^{m-1}}, ..., \overline{x^{m-i}}\rangle\subseteq \bigoplus_{n\geq 1}k[x]/x^n;$\\
\vdots
\end{itemize}  
where $\overline{x}$ denotes the class of $x$ in the corresponding quotient. It is clear that the chain of inclusions $$\Ker(h_x)\subset \Ker(h_x^2)\subset \dots \subset \Ker(h_x^i)\subset \dots $$ does not stabilize and, in fact,
 \begin{align*}
\underset{i=1}{\bigcup^\infty}\Ker(h_x^i) =\underset{n\geq 1}{\bigoplus^{\infty}} (k[x]/x^n)
\end{align*}
This states that every vector is anihilated by some power of $h_x$, but note that $h_x$ is not nilpotent.

\end{exam}

\subsection{On chain conditions.}\label{ss: Chain Conditions}

Before continuing with the theory of modules, we shall point out something. The reader may have noticed that the third item presented in the characterization of Theorem \ref{T: Charact CN endos FP} bears certain resemblance to the decomposition expressed in the well known Fitting Decomposition Theorem. To wit:

\begin{thm}(\textbf{Fitting Decomposition Theorem}.)\label{T: FDT}
Let $R$ be a ring and $M$ a finite-length module over $R.$ Then, for any $\f \in \ed(M),$ the endomorphism ring of $M,$ there is a positive integer $m$ such that $$M=\Ker(\f^m)\oplus \Img(\f^m).$$
\end{thm}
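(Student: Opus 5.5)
Since $M$ has finite length, it is both Artinian and Noetherian as an $R$-module, so both chains from the start of Section \ref{S: kx-modules and cn endo} stabilize. The kernels $\Ker(\f^i)$ form an ascending chain of submodules of $M$, which stabilizes by the Noetherian condition. The images $\Img(\f^i)$ form a descending chain, which stabilizes by the Artinian condition. We therefore fix a single positive integer $m$ with $\Ker(\f^m)=\Ker(\f^{j})$ and $\Img(\f^m)=\Img(\f^{j})$ for all $j\geq m$; in particular $\Ker(\f^m)=\Ker(\f^{2m})$ and $\Img(\f^m)=\Img(\f^{2m})$. Both are $R$-submodules because $\f$ is $R$-linear.

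\emph{Step 1: the intersection is zero.} Let $v\in \Ker(\f^m)\cap \Img(\f^m)$ and write $v=\f^m(w)$. Then $\f^{2m}(w)=\f^m(v)=0$, so $w\in \Ker(\f^{2m})=\Ker(\f^m)$. Hence $v=\f^m(w)=0$.

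\emph{Step 2: the sum is all of $M$.} Let $v\in M$. Since $\f^m(v)\in \Img(\f^m)=\Img(\f^{2m})$, there is $w$ with $\f^m(v)=\f^{2m}(w)$. Write
$$v=\bigl(v-\f^m(w)\bigr)+\f^m(w).$$
The second summand lies in $\Img(\f^m)$. Applying $\f^m$ to the first summand gives $\f^m(v)-\f^{2m}(w)=0$, so it lies in $\Ker(\f^m)$. Hence $M=\Ker(\f^m)\oplus \Img(\f^m)$.

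The only real input is the stabilization of the two chains at a common index $m$, which is exactly where finite length is used. The example $h_x$ of Example \ref{E: ex 2} shows that this can fail without such a hypothesis. Steps 1 and 2 are routine. Note that Step 1 needs only kernel stabilization and Step 2 needs only image stabilization. This matches the second item of Theorem \ref{T: Charact CN endos FP}, of which the statement is essentially the module-theoretic analogue.
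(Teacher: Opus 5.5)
Your proof is correct and is the standard argument. The paper cites the Fitting Decomposition Theorem without proof, remarking only that the Noetherian and Artinian hypotheses make the kernel chain \eqref{eq 1a} and the image chain \eqref{eq 2a} stabilize; that is exactly how you use finite length before the routine checks that $\Ker(\f^m)\cap\Img(\f^m)=0$ and $\Ker(\f^m)+\Img(\f^m)=M$.
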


It is well known that considering an endomorphism $\f \colon V\to V$ is equivalent to fix a $k[x]$-module structure in $V$ compatible with the vector space structure. This is, giving a morphism of $k$-algebras $\Phi\colon k[x]\to \ed_k(V)$ where: $$x\cdot v\equiv \Phi(x)(v):=\f(v),$$ for any $v\in V.$

Furthermore, it is also a classical result that a module is of finite length if and only if it is Artinian and Noetherian. In the case of a module over a field, a vector space, it is equivalent to be finite dimensional as a vector space than to be a module of finite length (see \cite[Proposition 6.10]{Ati}). 
\\Moreover, in the proof of Fitting Decomposition Theorem, the hypothesis of being Artinian and Noetherian are used to ensure that the chains presented in \eqref{eq 1a} (Noetherian) and \eqref{eq 2a} (Artinian) stabilize respectively. We shall devote the rest of the section to show that these conditions can be relaxed. Roughly speaking, we do not need to take control over all of the chains of submodules of our module to ensure that we have the decomposition and neither we shall impose any condition which is relative to the dimension of $V$ as a vector space. Notice that core-nilpotent endomorphisms were introduced for arbitrary vector spaces and do have the aforementioned decomposition. This is why the theory related to the decomposition presented in Theorem \ref{T: FDT} for the case of the module of endomorphisms over an arbitrary vector space shall be approached with other tools that enable us to establish a sharpened characterization for its existence in the language of $k[x]-$modules.\smallskip \\

Given an arbitrary $k-$vector space $V,$ and an endomorphism $\f \colon V \to V,$ the $k[x]-$module structure enables us to consider the localization in the multiplicative system $S_{x}=\{ 1,x,x^2,x^3,\dots, x^n,\dots\}$ that henceforth will be denoted as $$ V_{x}:=\faktor{(V\times S_x)}{\sim},$$ where $(v,x^n)\sim (v',x^m)$ if there exists an $x^j\in S_{x}$ such that $x^j(x^m\cdot v-x^n\cdot v')=0.$ Therefore, considering the localization morphism $\phi \colon V \to V_x$, one has the following exact sequence of $k[x]-$modules, and, in particular, of $k-$vector spaces: 
\begin{equation}\label{eq: Suc Exac Loc} 
0 \to \Ker(\phi) \to V \stackrel{\phi}{\to} V_x.
\end{equation}
As short exact sequences of vector spaces split, or in other words, as any vector subspace of a given vector space admits a supplementary subspace, there exists an isomorphism of $k-$vector spaces \begin{equation}\label{eq: IsoKespvect}
 V\simeq \Ker(\phi) \oplus \Img(\phi) .
\end{equation}
The following section that contains the main body of this article, is dedicated to answer two problems related to this isomorphism of $k-$vector spaces. \begin{itemize}
\item Giving a description of both, $\Ker(\phi)$ and $\Img(\phi)$ in terms of subspaces associated to the fixed endomorphism $\f \colon V \to V.$\smallskip
\item Studying the conditions under which the isomorphism of vector spaces presented in \eqref{eq: IsoKespvect} can be chosen to be an isomorphism of $k[x]-$modules.
\end{itemize}

Before continuing and maintaining the notations, recall that in general, $\Img(\phi)$ is not a sub--$k[x]$-module of $V$ but of $V_x.$ Therefore, we shall point out that the second statement is directly linked to realize $\Img(\phi)$ as a sub--$k[x]$-module of $V.$

\subsection{Study of the localization morphism}\label{ss: Study of Localization M}
Let $V$ be an arbitrary $k-$vector space and let us recall that we denote by $V_x$ the localization in the multiplicative system $S_{x}=\{1,x,x^2,\dots,x^n,\dots\}.$

\begin{prop}\label{P: Nuc y Loc}
Let $\f \in \ed_k(V).$ Let $\phi \colon V \to V_x$ be the localization morphism in the multiplicative system $S_x$. The following statements hold: 

\begin{itemize}
\item[I.)] $\Ker(\phi) =\underset{n\geq 0}{\bigcup}\Ker(\f^n) $ , with $\f^0=\Id\in \ed_k(V), $ being $\Id$ the identity morphism.  
\item[II.)] $V_x=0$ if and only if each $v\in V$ is annihilated by some power $\f^{n(v)}$ with $n(v)\in \N.$
\item[III.)] $\varphi$ is injective if and only if $\phi\colon V\to V_x$ is injective.
\end{itemize}
\end{prop}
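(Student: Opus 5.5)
The plan is to work directly from the definition of the localization $V_x$ and of the equivalence relation $\sim$, treating $\phi(v)=(v,1)$ modulo $\sim$. For I.), a vector $v$ lies in $\Ker(\phi)$ exactly when $(v,1)\sim(0,1)$. By the definition of $\sim$ this means there is $x^j\in S_x$ with $x^j(1\cdot v-1\cdot 0)=0$, that is $\f^j(v)=0$ for some $j\geq 0$. This is precisely the condition $v\in\bigcup_{n\geq 0}\Ker(\f^n)$, where the case $j=0$ is allowed by the convention $\f^0=\Id$ and contributes only $v=0$. Both inclusions come from reading this equivalence in each direction, so I.) is immediate.

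For II.), first observe that $V_x=0$ if and only if $\phi$ is the zero map. Indeed, every element $(v,x^n)$ of $V_x$ equals $x^{-n}\cdot\phi(v)$, since $(v,x^n)\sim$ is determined by $\phi(v)$ after multiplying by the unit $x^{-n}$ in $V_x$. Hence $V_x=\{0\}$ exactly when $\phi(v)=0$ for all $v$. The direct check is that $(v,x^n)\sim(0,1)$ if and only if $x^j v=0$ for some $j$, if and only if $\phi(v)=0$. So $V_x=0$ if and only if $\Ker(\phi)=V$, and by I.) this holds if and only if every $v$ satisfies $\f^{n(v)}(v)=0$ for some $n(v)$. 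Natural-number indices suffice here, since if $\f^0(v)=0$ then $v=0$ is also killed by $\f^1$.

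For III.), again use I.). The map $\phi$ is injective if and only if $\bigcup_{n\geq 0}\Ker(\f^n)=0$. If $\f$ is injective, then every power $\f^n$ is injective, so each $\Ker(\f^n)=0$ and the union vanishes. Conversely, $\Ker(\f)\subseteq\bigcup_n\Ker(\f^n)=0$, so $\f$ is injective.

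The only point needing care is in II.): making explicit that an arbitrary fraction $(v,x^n)$ vanishes exactly when $\phi(v)$ vanishes. This follows by unwinding $\sim$ with $v'=0$, which gives $x^j\cdot x^0\cdot v=0$ regardless of $n$. Everything else reduces to I.).
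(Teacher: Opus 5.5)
Your proof is correct and follows essentially the same route as the paper: unwind the equivalence relation defining $V_x$ to see that a fraction $v/x^i$ vanishes exactly when $\varphi^j(v)=0$ for some $j\geq 0$, and read off I.), II.) and III.) from this. Your version simply spells out details the paper leaves implicit, such as the explicit two-inclusion argument for III.), which the paper calls obvious.
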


\begin{proof}
Let us begin by proving $I).$ By construction of the localization, a fraction (i.e. an element of the localization) $\frac{v}{x^i}\in V_x$ is $0$ when there exists some $x^j\in S_x ,$ with $j\geq 0,$ such that $x^j\cdot v=0.$ By the structure of $k[x]-$module this condition is equivalent to say that a fraction is $\frac{v}{x^i}=0$ when $v\in \Ker(\f^j)$ for certain $j\geq 0.$\\
Statement $II)$ is a direct consequence of the aforementioned relation between the equivalent class of $0$ in $V_x$ and the kernels of the powers of $\f.$ The third statement is obvious.
\end{proof}
We shall highlight that $II)$ of the previous proposition occurs, for example, if $\f$ is a nilpotent endomorphism.
\begin{defn}\label{D: pw-nilpo}
We way that an endomorphism $\f\in \ed_k(V)$ is \textit{pointwise nilpotent} if $V_x=0$, i.e. each $v\in V$ is annihilated by some power $\f^{n(v)}$ with $n(v)\in \N.$
\end{defn}

Let us now study the image of the localization morphism. Let us consider the following inductive system of vector spaces: $$ V \stackrel{\f}{\to} \Img(\f) \stackrel{\f}{\to} \Img(\f^2) \stackrel{\f}{\to} \dots \stackrel{\f}{\to} \Img(\f^n) \stackrel{\f}{\to} \dots .$$
\begin{prop}\label{P: Colim Img}
Let $\f \in \ed_k(V)$ be an endomorphism over a $k-$vector space. Let $\phi \colon V \to V_x$ be the localization morphism in the multiplicative system $S_x.$ 
There exists a natural isomorphism of $k[x]-$modules: $$\underset{i\geq 0}{\varinjlim} \Img(\f^i) \simeq \Img(\phi), $$ taking direct limit within the inductive system described previously. 
\end{prop}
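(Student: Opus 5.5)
We will build an explicit morphism from the direct limit to $\Img(\phi)$ by the universal property, and then check directly that it is bijective and $k[x]$-linear. Write $L=\varinjlim_{i\geq 0}\Img(\f^i)$, with transition maps $\f\colon \Img(\f^i)\to\Img(\f^{i+1})$. An element of $L$ is a class $[w]_i$ with $w\in\Img(\f^i)$. Two such classes satisfy $[w]_i=[w']_j$ exactly when $\f^{j-i+l}(w)=\f^{l}(w')$ for some $l\geq 0$; here we may assume $j\geq i$.

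\textbf{Step 1 (the compatible maps).} For each $i$, define $\alpha_i\colon \Img(\f^i)\to V_x$ by $\alpha_i(\f^i(v))=\frac{v}{1}\in V_x$.

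The one point needing care is that $\alpha_i$ is well defined. If $\f^i(v)=\f^i(v')$, then $v-v'\in\Ker(\f^i)\subseteq \Ker(\phi)$ by Proposition \ref{P: Nuc y Loc} I.), so $\frac{v}{1}=\frac{v'}{1}$.

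Linearity over $k$ is clear. For compatibility, take $w=\f^i(v)$. Then $\f(w)=\f^{i+1}(v)$, so $\alpha_{i+1}(\f(w))=\frac{v}{1}=\alpha_i(w)$. Hence the universal property of the direct limit gives a $k$-linear map $\alpha\colon L\to V_x$, namely $\alpha([\f^i(v)]_i)=\frac{v}{1}$.

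\textbf{Step 2 (image and injectivity).} The image of $\alpha$ is exactly $\{\frac{v}{1}: v\in V\}=\Img(\phi)$, already at level $i=0$.

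For injectivity, suppose $\alpha([\f^i(v)]_i)=0$. Then $\frac{v}{1}=0$, so by Proposition \ref{P: Nuc y Loc} I.) we have $\f^l(v)=0$ for some $l$. Consequently $\f^{l}(\f^i(v))=\f^i(\f^l(v))=0$, which means the class $[\f^i(v)]_i$ vanishes in the limit.

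\textbf{Step 3 (module structure).} We make $L$ a $k[x]$-module by letting $x$ act levelwise through $\f$: $x\cdot[w]_i=[\f(w)]_i$. This is compatible with the transition maps because they are also $\f$, and $\f$ commutes with itself. Then
$$\alpha(x\cdot[\f^i(v)]_i)=\alpha([\f^i(\f(v))]_i)=\tfrac{\f(v)}{1}=x\cdot\tfrac{v}{1},$$
so $\alpha$ is $k[x]$-linear.

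It is worth noting that on $L$ the action of $x$ is invertible: its inverse is $[w]_{i+1}\mapsto [w]_i$, using that $[w]_{i+1}=[\f(w)]_{i+1}$ when $w$ is regarded in level $i$. This matches the fact that $x$ acts invertibly on $V_x$.

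Naturality in $(V,\f)$ follows because every construction above commutes with $k[x]$-morphisms.

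The main obstacle is bookkeeping rather than ideas. The element $w\in\Img(\f^i)$ has many preimages $v$, and the same $w$ appears at different levels of the system. Both issues are absorbed by the single fact $\bigcup_n\Ker(\f^n)=\Ker(\phi)$ from Proposition \ref{P: Nuc y Loc} I.), which we therefore use both for well-definedness in Step 1 and for injectivity in Step 2.
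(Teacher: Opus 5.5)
Your Steps 1--3 are correct and prove the statement. The key input is the same as in the paper, namely $\bigcup_i\Ker(\f^i)=\Ker(\phi)$, but the packaging differs. The paper passes to the direct limit of the short exact sequences $0\to\Ker(\f^i)\to V\xrightarrow{\f^i}\Img(\f^i)\to 0$ (identity on $V$, $\f^{j-i}$ on the images). It then uses exactness of filtered direct limits to get $\varinjlim_i\Img(\f^i)\simeq V/\bigcup_i\Ker(\f^i)=V/\Ker(\phi)$, and concludes by the five lemma. You instead write down the comparison map $[\f^i(v)]_i\mapsto v/1$ via the universal property and check well-definedness, bijectivity and $k[x]$-linearity by hand. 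The paper's route is shorter and more conceptual. Yours is elementary, and it makes explicit two things the paper leaves implicit: the actual arrow $\varinjlim_i\Img(\f^i)\to\Img(\phi)$, and the $k[x]$-module structure on the limit.

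You should, however, delete the aside claiming that $x$ acts invertibly on $L$, because it is false. The identity that actually holds is $[w]_i=[\f(w)]_{i+1}$, coming from the transition map from level $i$ to level $i+1$. It shows that your proposed inverse $[w]_{i+1}\mapsto[w]_i=[\f(w)]_{i+1}=x\cdot[w]_{i+1}$ is multiplication by $x$ itself. The identity $[w]_{i+1}=[\f(w)]_{i+1}$ that you invoke fails in general.

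Nor can the claim be rescued. If $\f$ is injective but not surjective (e.g.\ $V=k[x]$ with $\f$ multiplication by $x$), every transition map $\Img(\f^i)\to\Img(\f^{i+1})$ is an isomorphism. Hence $L\simeq V$ with $x$ acting as $\f$, which is not invertible. This agrees with what you proved, $L\simeq\Img(\phi)$, and with the paper's remark that $x$ need not act invertibly on the submodule $\Img(\phi)\subseteq V_x$; in fact it does so exactly when $\phi$ is surjective. Since the aside is never used, removing it leaves your proof intact.
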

\begin{proof}
For every $j>i,$ there exists commutative diagrams:  
\begin{align*}
\xymatrix{ 
0 \ar[r] & \Ker(\f^i) \ar[d] \ar[r] & V\ar[d]^{\Id}\ar[r]^{\f^i}& \Img(\f^i) \ar[r] \ar[d]^{\f^{j-i}} & 0 \\
 0 \ar[r] & \Ker(\f^j) \ar[r] & V \ar[r]^{\f^j}& \Img(\f^j) \ar[r] & 0  
}
\end{align*} where the vertical arrow in the left is the natural inclusion. In virtue of Proposition \ref{P: Nuc y Loc}, the following identifications hold: $$\underset{i\geq 0}{\varinjlim} \Ker(\f^i) \simeq \underset{i \geq 0}{\bigcup} \Ker(\f^i) =\Ker(\phi). $$  To conclude, it is sufficient to recall that filtered inductive limits of modules are exact. Hence, the result is deduced using the ``Fifth's Lemma'' in the following exact sequences: \begin{align*}
\xymatrix{ 
0 \ar[r] & \Ker(\phi) \ar[d] \ar[r] & V\ar[d]\ar[r]^{ \underset{i\geq 0}{\varinjlim}(\f^i)}& \underset{i \geq 0}{\varinjlim}  \Img(\f^i) \ar[r] \ar[d] & 0 \\
 0 \ar[r] & \Ker(\phi) \ar[r] & V \ar[r]^{\phi}& \Img(\phi) \ar[r] & 0 . 
}
\end{align*} 
\end{proof}

\begin{lem}\label{L: Kerphi f inv}
Let $\f \in \ed_k(V)$ be an endomorphism over a $k-$vector space. Let $\phi \colon V \to V_x$ be the morphism of localization in the multiplicative system $S_x.$ The subspace $\Ker(\phi) \subseteq V$ is $\f-$invariant.
\end{lem}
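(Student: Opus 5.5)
The claim is elementary. There are two natural routes, and I would present the first and mention the second as a check.

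\textbf{Direct route via Proposition \ref{P: Nuc y Loc}.} By part I.) of that proposition, $\Ker(\phi)=\bigcup_{n\geq 0}\Ker(\f^n)$. It therefore suffices to show that each $\Ker(\f^n)$ is mapped into the union by $\f$. Take $v\in\Ker(\phi)$ and choose $n$ with $\f^n(v)=0$. If $n=0$, then $v=0$ and $\f(v)=0\in\Ker(\phi)$. If $n\geq 1$, then
$$\f^{n-1}(\f(v))=\f^n(v)=0 .$$
Hence $\f(v)\in\Ker(\f^{n-1})\subseteq\Ker(\phi)$. In fact we even have $\f(\Ker(\f^n))\subseteq\Ker(\f^{n-1})$. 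Thus $\f(\Ker(\phi))\subseteq\Ker(\phi)$, as required. Note that $\Ker(\phi)$ is a subspace and not merely a subset: the kernels form an increasing chain, so their union is a subspace. This also follows directly from the fact that it is the kernel of a linear map.

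\textbf{Module-theoretic check.} The localization map $\phi\colon V\to V_x$ is a morphism of $k[x]$-modules, as already used in the exact sequence \eqref{eq: Suc Exac Loc}. Its kernel is therefore a $k[x]$-submodule of $V$, i.e. it is stable under multiplication by $x$. Since $x\cdot v=\f(v)$, this is exactly $\f$-invariance. Explicitly, $\phi(\f(v))=\frac{x\cdot v}{1}=x\cdot\phi(v)=0$ whenever $\phi(v)=0$.

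There is no real obstacle here. The only point requiring care is the convention $\f^0=\Id$, which is handled by the $n=0$ case above, and the observation that part I.) of Proposition \ref{P: Nuc y Loc} already identifies the kernel with the union of the kernels of the powers.
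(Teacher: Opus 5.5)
Your proof is correct. Your ``module-theoretic check'' is exactly the paper's argument: the paper's entire proof is the one-line observation that the kernel of a morphism of $k[x]$-modules is a sub-$k[x]$-module. Such a submodule is stable under multiplication by $x$, which acts as $\varphi$.

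Your primary route differs. It goes through part I.) of Proposition~\ref{P: Nuc y Loc}, identifies $\Ker(\phi)=\bigcup_{n\geq 0}\Ker(\varphi^n)$, and checks invariance elementwise. This is valid and even gives the finer inclusion $\varphi(\Ker(\varphi^n))\subseteq\Ker(\varphi^{n-1})$. However, it relies on that explicit description of the kernel, whereas the paper's argument needs no knowledge of what $\Ker(\phi)$ looks like.

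If you keep the direct route, there is a quicker way to run it. Each $\Ker(\varphi^n)$ is already $\varphi$-invariant because $\varphi$ commutes with $\varphi^n$, so you need no case distinction for $n=0$.
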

\begin{proof}
The kernel of a morphism of $k[x]-$modules is always a sub--$k[x]$-module.
\end{proof}

\begin{rem}
By construction, while multiplication by $x$ is an isomorphism on $V_x$, it is not necessarily an isomorphism on a sub-$k[x]$-module such as $\Img(\phi)$.
\end{rem}

\begin{exam}Let us go back to the endomorphisms $\varphi_1$ and $\varphi_2$ of Example \ref{E: ex 2}. If we now consider the localization, as localizing commutes with taking direct sums, one has that: $$\phi_{\f_1} \colon k[x]\oplus k[x]/x^n \to (k[x])_x\oplus (k[x]/x^n)_x, $$ and moreover, the multiplicative system $S_x$ intersects the annihilator of $k[x]/x^n,$ therefore: $$\phi_{\f_1} \colon k[x]\oplus k[x]/x^n \to (k[x])_x\oplus 0. $$ Hence, one obtains that $\Ker(\phi_{\f_1})=0\oplus k[x]/x^n,$ which is coherent with the calculations showed in \eqref{eq: ejeunionnucleos1} and the result presented in Proposition \ref{P: Nuc y Loc}.

As for the endomorphism $h_x$ in Example \ref{E: ex 2}: if we consider the localization morphism, using that localizing commutes with taking direct sums $$\phi_{h_x} \colon \underset{n\geq 1}{\bigoplus^{\infty}} (k[x]/x^n) \to \underset{n\geq 1}{\bigoplus^{\infty}} (k[x]/x^n)_{x}, $$ and similarly to the previous example, the multiplicative system $S_x$ intersects the annihilator of the module and therefore: $$\phi_{h_x} \colon \underset{n\geq 1}{\bigoplus^{\infty}} (k[x]/x^n) \to 0. $$ Hence, $$\Ker(\phi_{h_x})= \underset{n\geq 1}{\bigoplus^{\infty}} (k[x]/x^n)$$ which agrees with the previous calculation and with Proposition \ref{P: Nuc y Loc}. 
\end{exam}
\begin{exam}
Consider the derivative operator in one variable 
$$D\colon k[x]\to k[x]$$
with $D(p(x))=p'(x)$. This operator clearly satisfies that $\Ker(\phi_D)=k[x]$.
\end{exam}

Now, consider the exact sequence
$$0\to\Ker(\phi)\to V\to\Img(\phi)\to 0,$$
whose components we have described in the previous Lemmas. It is a well known fact of commutative algebra that sections of $\sigma\colon V\to \Img(\phi)$ are in correspondence with isomorphisms
$$V\simeq \Ker(\phi)\oplus \Img(\phi)$$ In terms of actual subspaces of $V$, this translates into the following:
\begin{thm}[\textbf{Properties and uniqueness of the splitting}]\label{T: general}
If the morphism $\phi$ admits a section of $k[x]$-modules, then there is a decomposition $V=U\oplus W$ with $U, W\subseteq V$ being $\varphi$-invariant subspaces such that $U_x=0$ and so that $W\to W_x$ is injective (i.e. $\f_{|W}$ is injective). 

Additionally, $W\overset{\sim}{\to}W_x$ is an isomorphism ($\varphi_{|W}$ is an isomorphism) if and only if $\phi\colon V\to V_x$ is surjective and, in this case, the decomposition is unique and the converse holds: if $V=U\oplus W$ with
 $U, W\subseteq V$ being $\varphi$-invariant subspaces, $U_x=0$ and $W\simeq W_x$, then $\phi$ admits a (unique) section of $k[x]$-modules.
\begin{proof}
Let $\sigma\colon\Img(\phi)\to V$ be a section of $k[x]$-modules of $\phi$. We obtain a decomposition as a direct sum of subspaces
$$V=U\oplus W$$
with $U=\Ker(\phi)$ and $W=\sigma(\Img(\phi))$. By construction we have that $U_x=0$ and, since multiplication by $x$ is invertible at $V_x,$ also by construction, it is injective when restricted to the sub-$k[x]$-module $\Img(\phi)\subseteq V_x$. By Proposition \ref{P: Nuc y Loc}, this implies that $\Img(\phi)\to \Img(\phi)_x$ is injective and, since $\sigma\colon\Img(\phi)\overset{\sim}{\to}W\subseteq V$ identifies the abstract space $\Img(\phi)$ with its image $W$ (as $k[x]$-modules), we conclude that $W\to W_x$ is injective as well. 

Now it is clear that $W\to W_x$ is an isomorphism if and only if $\Img(\phi)\to \Img(\phi)_x$ is an isomorphism, but one also has that the section $\sigma\colon \Img(\phi)\hookrightarrow V$ induces an isomorphism $\Img(\phi)_x\simeq V_x$ by exactness of localization, so the commutative diagram:
\begin{align*}
\xymatrix{ 
V\ar[r]^{\phi}\ar[d]_{\phi} & \Img(\phi)\ar[d]^{\overline{\phi}}\\
V_x&\ar[l]^{\sim\,\,\,\,\,\,}  \Img(\phi)_x
}
\end{align*}
where the top arrow is surjective and the right arrow $\overline{\phi}$ is injective, concludes that $W\to W_x$ is an isomorphism if and only if $V\to V_x$ is surjective.

Now, assuming that $V\to V_x$ is indeed surjective, we prove that, if $V=U\oplus W$ with $U_x=0$ and $W\simeq W_x$, then $U=\Ker(\phi)$ and $W\simeq \Img(\phi)\simeq V_x,$ this is, uniqueness up to isomorphism as abstract spaces. Due to exactness of localization, we may consider the commutative diagram:
\begin{align*}
\xymatrix{ 
0 \ar[r] & U \ar[r]\ar[d] & V\ar[d]^{\phi}\ar[r]^{\pi}& W \ar[r] \ar[d]^{\rho} & 0 \\
 0 \ar[r] & 0 \ar[r] & V_x \ar[r]^{\pi_x}_{\sim}& W_{x} \ar[r] & 0  
}
\end{align*}
and denote by $\alpha \colon W \hookrightarrow V$ the natural inclusion, which satisfies $\pi \circ \alpha =\Id$. The composition $\alpha \circ \rho^{-1}\circ \pi_x$ is a section of $\phi.$ Indeed, as $\phi=\pi_x^{-1}\circ \rho \circ \pi,$ one has that 
$$\phi\circ (\alpha \circ \rho^{-1}\circ \pi_x)=\pi_x^{-1}\circ \rho\circ \pi \circ \alpha \circ \rho^{-1}\circ \pi_x=\Id_{V_{x}}$$
and we conclude that $U=\Ker(\phi)$ and that $W\simeq \rho(W)\simeq V_x$. In particular, we have that the localization map $V\to V_x$ admits a section, which is exactly $\sigma=\alpha\circ\rho^{-1}\circ \pi_x.$

Finally, uniqueness up to isomorphism as abstract spaces follows from the previous commutative diagram, as well as uniqueness of  $U$ as a subspace of $W$. There only remains to show that $W$ is unique as a subspace of $V,$ i.e. that the decomposition is unique up to equality of subspaces of $V$, or in other words, that the section $\sigma$ is unique.  To see this, we need to prove that there is only one supplementary $W\subseteq V$ of $U$ where $\f$ is an isomorphism. Consider $U\oplus W=U\oplus W'$ two different $\f$-invariant decompositions with $\f_{|W}$ and $\f_{|W'}$ being isomorphisms and let $w\in W$ be any vector. Then $\f(w)=u+w'$ with $u\in U$ and $w'\in W'$ by the decomposition, but $u=0$ by the $\f$-invariance; and since $\f_{|W'}$ is an isomorphism we have that $w=\f^{-1}(w')\in W'$. This shows that $W\subseteq W'$ and one argues analogously for the converse. 
\end{proof}
\end{thm}

\begin{cor}\label{C: pwCN}
Assuming that $\phi\colon V\to V_x$ is surjective, then $\phi$ admits a (unique) section of $k[x]$-modules if and only if $\f$ can be written as a sum $\f=\f_1+\f_2$ with $\f_1, \f_2\in\ed_k(V)$ satisfying that \begin{itemize}
\item $i(\f_1)\leq 1$;
\item $\f_2$ is pointwise nilpotent;
\item $\f_1\circ \f_2=0=\f_2\circ \f_1$. 
\end{itemize}
\begin{proof}

Let us assume that $\phi$ is surjective and that $\f=\f_1+\f_2$ as in the statement, then we can define $U=\Ker(\f_1)$ and $W=\Img(\f_1)$. Since $i(\f_1)\leq 1$, then $V=U\oplus W$ as $\f_1$-invariant subspaces. Let us see that this decomposition is also $\f$-invariant. Indeed, if $u\in U$, then $\f(u)=\f_2(u)$ by construction, so $\f_1(\f_2(u))=0$ by the third condition, $\f(u)=\f_2(u)\in\Ker(\f_1)$; on the other hand, if $w=\f_1(v)$, then $\f(w)=\f(\f_1(v))=\f_1(\f_1(v))\in \Img(\f_1)$. By the converse implication of Theorem \ref{T: general}, the section exists.  For the other implication, simply define 
$$\f_1(v)=\begin{cases}\f(v)\text{ if }v\in W\\
0\,\,\,\,\,\,\,\,\,\text{ if }v\in U\end{cases},\,\,\,\,\,\f_2(v)=\begin{cases}0\,\,\,\,\,\,\,\,\,\text{ if }v\in W\\
\f(v)\text{ if }v\in U\end{cases};$$
it is clear that these linear maps satisfy the conditions of the statement. 
\end{proof} 
\end{cor}
\begin{defn}\label{D: pw-CN}
We say that an endomorphism $\f$ is \textit{pointwise core-nipotent} when it an be written as a sum $\f=\f_1+\f_2$ with $\f_1, \f_2\in\ed_k(V)$ satisfying that \begin{itemize}
\item $i(\f_1)\leq 1$;
\item $\f_2$ is pointwise nilpotent;
\item $\f_1\circ \f_2=0=\f_2\circ \f_1$. 
\end{itemize}
\end{defn}

It is worth noting that the condition for surjectivity of $\phi\colon V\to V_x$ is weaker than stabilization of the chains \eqref{eq 1a} or \eqref{eq 2a}.
\begin{lem}\label{L: surjective}
The map $\phi\colon V\to V_x$ is surjective if and only if for each $v\in V$ there exists some $m\geq 0$ (dependent on $v$) such that $\f^m(v)\in \Img(\f^{m+s})$ for all $s\geq 0$.
\begin{proof}
Clearly, $\phi$ is surjective if each $v/x^s\in V_x$ is of the form $w/1$ for some $w\in V$. This happens if and only if $\f^m(v)=\f^{m+s}(w)$ for some $m$ dependent on $v$.
\end{proof}
\end{lem}

\begin{exam}
Let $k[[x]]$ be the $k$-vector space of formal power series and let $$D\colon k[[x]]\to k[[x]]$$
be the formal derivative operator, this is, $D(\sum_{i\geq 0} a_ix^i)=\sum_{i\geq 1}ia_i x^{i-1}$. The morphism
$$\phi_D\colon k[[x]]\to k[[x]]_x$$
is surjective by the previous Lemma, because every formal power series is integrable, and $\Ker(\phi_D)=k[x]$; i.e. we have an exact sequence
$$0\to k[x]\hookrightarrow k[[x]]\to k[[x]]_x\to 0.$$
This exact sequence of $k[x]$-modules is not split. 
\end{exam}

\begin{cor}
If the chain of inclusions \eqref{eq 2a} stabilizes, $\phi\colon V\to V_x$ is surjective. In that case, the morphism $\phi$ is naturally isomorphic to $\varphi^n\colon V\to\Img(\varphi^n)$ for any $n$ larger than the stabilization index of the chain. 
\begin{proof}
Under this hypothesis, the natural map $\Img(\varphi^n)\to \varinjlim_{i\geq 0}\Img(\varphi^i)\simeq V_x$ is an isomorphism. 
\end{proof}
\end{cor}
\begin{cor}
Let $\varphi\in\mathrm{End}_k(V)$ be an injective endomorphism. Then, it is an isomorphism (i.e. surjective) if and only if for each $v\in V$ there exists some $m\geq 0$ (dependent on $v$) such that $\f^m(v)\in \Img(\f^{m+s})$ for all $s\geq 0$.
\begin{proof}
If follows from Proposition \ref{P: Nuc y Loc}, Lemma \ref{L: surjective} and the fact that $\f$ is an isomorphism if and only if $\phi\colon V\to V_x$ is an isomorphism. It also follows from direct computation for $s=1$. 
\end{proof}
\end{cor}

\section{The ``pointwise Drazin inverse''}
If the exact sequence of $k[x]$-modules $0\to \Ker(\phi)\to V\to V_x\to 0$ (in the hypothesis of Theorem \ref{T: general}) splits, we have a definition for a generalized inverse of $\varphi$. Later, we will see that, when $\varphi$ is a core-nilpotent endomorphism, this natural inverse will coincide with the Drazin inverse of $\varphi$. 

The construction goes as follows: consider the commutative diagram
\begin{align*}
\xymatrix{ 
V\ar[d]_{\varphi}\ar[r]^{\phi}& V_x\ar[r]\ar[d]^{\varphi_x}& 0\\
V\ar[r]^{\phi} & V_x \ar[r] & 0.
}
\end{align*}
Since $\varphi_x$ is an isomorphism and $\sigma\colon V_x\to V$ is a section, we can define
\begin{align}\label{eq gen drazin}
\varphi^d:=\sigma\circ \varphi_x^{-1}\circ \phi.
\end{align}
Note that the fact that $\sigma$ is a section of $k[x]$-modules implies that 
$$\varphi\circ\sigma=\sigma\circ\varphi_x.$$
\begin{lem}
The endomorphism $\varphi^d$ satisfies the following properties:
\begin{itemize}
\item $\varphi^d\circ \varphi\circ \varphi^d=\varphi^d$;
\item $\varphi^d\circ \varphi=\varphi\circ \varphi^d$ is the projector onto $W$ induced by the section $\sigma;$
\item $\phi\circ\varphi\circ\varphi^d=\phi$.
\end{itemize}
\begin{proof}
For the first property, write
\begin{align*}\varphi^d\circ &\varphi\circ \varphi^d=\sigma\circ \varphi_x^{-1}\circ \phi\circ\varphi\circ \sigma\circ \varphi_x^{-1}\circ \phi=\\&=\sigma\circ \varphi_x^{-1}\circ \phi\circ\sigma\circ\varphi_x\circ \varphi_x^{-1}\circ \phi=\sigma\circ\varphi_x^{-1}\phi=\varphi^d;\end{align*}
where in the last step we have used that $\phi\circ\sigma=\Id_{V_x}$. The second property follows from the fact that all morphisms in the definition of $\varphi^d$ are of $k[x]$-modules and that $\varphi\circ\varphi^d=\varphi\circ\sigma\circ\varphi_x^{-1}\circ\phi=\sigma\circ\varphi_x\circ\varphi_x^{-1}\circ\phi=\sigma\circ\phi$, which is exactly the projector associated to $\sigma$.  As for the third property
$$\phi\circ\varphi\circ\varphi^d=\phi\circ\varphi\circ\sigma\circ\varphi_x^{-1}\circ\phi=\phi\circ\sigma\circ \varphi_x\circ\varphi_x^{-1}\circ\phi=\phi,$$
which completes the proof.
\end{proof}
\end{lem}
\begin{rem}
Explicitly, the third condition of the Lemma says that, for every $v\in V$, there exists some $m\geq 0$ dependent on $v$ such that $\varphi^d(\varphi^{m+1}(v))=\varphi^m(v)$. 
\end{rem}

Existence of such an ``inverse'' characterizes when the localization sequence splits.
\begin{lem}\cite[Definition 3.1]{Fpa-CN}\label{L: index 1}
If $f, g$ are endomorphisms such that $f=f\circ g \circ f$ and so that $f\circ g=g\circ f$, then $i(f)\leq 1$. 
\end{lem}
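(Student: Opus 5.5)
We must show $i(f)\leq 1$ from $f=fgf$ and $fg=gf$. By Definition \ref{D: Index}, it suffices to prove $\Ker(f)=\Ker(f^2)$ and $\Img(f)=\Img(f^2)$. (If $f$ happens to be an automorphism the index is $0$, which is consistent with the claim, so no case split is really needed: once the two equalities hold for $m=1$, the index is at most $1$.) The whole argument is a direct manipulation of the two identities, using commutativity to move $g$ to whichever side is convenient.

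For the kernels, the inclusion $\Ker(f)\subseteq\Ker(f^2)$ is trivial. Conversely, let $v\in\Ker(f^2)$. Then
\[
f(v)=f g f(v)=g f f(v)=g(f^2(v))=0,
\]
using $fg=gf$ in the second equality. Hence $v\in\Ker(f)$.

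For the images, $\Img(f^2)\subseteq\Img(f)$ is trivial. Conversely, for any $v$ we have
\[
f(v)=fgf(v)=f f g(v)=f^2(g(v))\in\Img(f^2),
\]
again using $gf=fg$.

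Thus $m=1$ satisfies the condition of Definition \ref{D: Index}, so $i(f)\leq 1$. There is no genuine obstacle here. The only point requiring care is applying commutativity on the correct side: in the kernel step $g$ is moved to the left so that it acts on $f^2(v)$, and in the image step it is moved to the right so that $f^2$ appears in front.
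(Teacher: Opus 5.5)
Your argument is correct and complete. The identities $f=g\circ f^2=f^2\circ g$, which follow from $f=f\circ g\circ f$ and $f\circ g=g\circ f$, give $\Ker(f^2)\subseteq\Ker(f)$ and $\Img(f)\subseteq\Img(f^2)$, so $m=1$ satisfies Definition \ref{D: Index} and $i(f)\leq 1$ (with index $0$ if $f$ is an automorphism). The paper gives no proof of its own here, only a citation to \cite{Fpa-CN}; your direct check is the standard argument that citation stands in for.
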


\begin{thm}[\textbf{Existence of the splitting}]\label{T: local drazin}
Let $\varphi$ be an endomorphism. The sequence $$0\to \Ker(\phi)\to V\overset{\phi}{\to} V_x\to 0$$ is split exact if and only if there exists an endomorphism $\alpha\colon V\to V$ such that 
\begin{itemize}
\item $\alpha\circ\varphi\circ \alpha=\alpha$;
\item $\alpha\circ\varphi=\varphi\circ\alpha$;
\item $\phi\circ\varphi\circ\alpha=\phi$. 
\end{itemize}
Furthermore, this endomorphism $\alpha$ is unique.
\begin{proof}
There only remains to prove the ``if'' part. By Corollary \ref{C: pwCN}, it suffices to check that $\varphi$ is pointwise core-nilpotent and, to be in the hypothesis of the aforementioned Corollary, to see that $\phi\colon V\to V_x$ is surjective. First, note that, by the first condition, $\alpha\circ\varphi=(\alpha\circ\varphi)^2$, so $\alpha\circ\varphi$ is a projector. By the third condition
$$\phi(\mathrm{Id}-\varphi\circ\alpha)=0,$$
hence $\Img(\mathrm{Id}-\varphi\circ\alpha)=\Ker(\alpha\circ \varphi)\subseteq\Ker(\phi)$. But we actually have an equality, because $v\in \Ker(\phi)$ if and only if $\varphi^m(v)=0$ for some $m\geq 0$ (dependent on $v$), and then 
$$(\alpha\circ\varphi)(v)=(\alpha\circ\varphi)^m(v)=(\alpha^m\circ\varphi^m)(v)=0.$$

Now, since we have shown that $\alpha\circ\varphi$ is a projector and have computed its kernel, we have an isomorphism of $k[x]$-modules $$\beta= (\mathrm{Id}-\alpha\circ\varphi, \alpha\circ\varphi)\colon V\to U\oplus W$$ with $W=\Img(\alpha\circ\varphi)=\Ker(\mathrm{Id}-\alpha\circ\varphi)$ and $U=\Ker(\phi)=\Ker(\alpha\circ\varphi)$. This allows us to write $\varphi=\varphi_1+\varphi_2$ with $\varphi_1$ being the extension by zero over $U$ of $\varphi_{|W}$ and $\varphi_2$ being the extension by zero over $W$ of $\varphi_{|U}$. This is the pointwise core-nilpotent decomposition of $\varphi$ because, on the one hand, $i(\varphi_{|W})\leq 1$ because $\varphi_1=\varphi\circ\alpha\circ\varphi$ by the projector condition and definition of $W$ and by Lemma \ref{L: index 1} together with the fact that 
$$(\varphi\circ\alpha\circ\varphi)\circ \alpha\circ (\varphi\circ\alpha\circ\varphi)=\varphi\circ\alpha\circ\varphi\circ \alpha\circ\varphi=\varphi\circ\alpha\circ\varphi.$$
On the other hand, $\varphi_2$ is pointwise nilpotent because the elements of $U$ are exactly those anihilated by some power of $\varphi$ that depends on the element. Finally, note that the condition
$$\phi(\mathrm{Id}-\varphi\circ\alpha)=(\mathrm{Id}-\varphi_x\circ\alpha_x)\phi=0$$
(where $\varphi_x, \alpha_x\colon V_x\to V_x$ are the induced morphisms) implies that  $$\Img(\phi)\subseteq\Ker(\mathrm{Id}-\varphi_x\circ\alpha_x)=\Img(\varphi_x\circ \alpha_x)\simeq (\Img(\varphi\circ\alpha))_x=W_x$$ (localization commutes with taking image because both spaces consist of vectors of the form $\phi(\alpha(v))/1$ for $v\in V$), so we have a natural morphism $$\Img(\phi)\hookrightarrow W_x\overset{\sim}{\leftarrow} W,$$
where the second isomorphism comes from the fact that $\varphi_{|W}$ is an isomorphism because $i(\varphi_1)\leq 1$ (otherwise $W=0$). We have a commutative diagram
$$\xymatrix{0\ar[r]&\Ker(\varphi\circ\alpha)\ar[r]\ar[d]&V\ar[r]^{\varphi\circ\alpha}\ar@{=}[d]& W\ar[r] & 0 \\
0 \ar[r] & \Ker(\phi) \ar[r] & V\ar[r]^{\phi} & \Img(\phi)\ar[u]\ar[r] & 0;}$$which, together with the fact that vertical inclusion on the left hand side is actually $\Ker(\phi)=\Ker(\varphi\circ\alpha)$ and by the Fifth Lemma, is enough to obtain that $\Img(\phi)\simeq W$.  Since the top exact sequence is split exact of $k[x]$-modules (with the section of $\varphi\circ\alpha$ provided by the natural inclusion), the bottom one is also split exact and $\Img(\phi)$ is indeed identified with $W$ via the section $\Img(\phi)\simeq W\subseteq V$. Again, since $\varphi_1$ is of index $\leq 1$, we have that $\varphi_{|W}$ is an isomorphism (otherwise $W=0$); and by Theorem \ref{T: general}, this shows that $\phi\colon V\to V_x$ must be surjective. Uniqueness of $\alpha$ now follows from the same result.
\end{proof}
\end{thm}
\begin{defn}
Let $\varphi\colon V\to V$ be an endomorphism. We say that $\alpha\colon V\to V$ is a \textit{pointwise Drazin inverse} of $\varphi$ if it satisfies the three conditions of Theorem \ref{T: local drazin}. By Theorem \ref{T: local drazin}, it is unique and we denote it by $\varphi^d$.
\end{defn}
\begin{cor}
An endomorphism $\varphi$ is pointwise core-nilpotent if and only if it admits a unique pointwise Drazin inverse. Furthermore, in this case, if $\varphi=\varphi_1+\varphi_2$ is the aforementioned decomposition:
\begin{itemize}
\item $\varphi_1=\varphi\circ\varphi^d\circ\varphi;$
\item $\varphi_2=\varphi-\varphi\circ\varphi^d\circ\varphi.$
\end{itemize}
\begin{proof}
This is all part of the proof of Theorem \ref{T: local drazin}.
\end{proof}
\end{cor}

\begin{exam}
Let $k[x]$ be the $k$-vector space of polynomials and consider the endomorphism $\varphi$ defined by
$$\varphi\bigg(\sum_{i\geq 0}a_ix^i\bigg)=\sum_{i\geq 0}(a_{2i+2}x^{2i}+(2i+2)a_{2i+1}x^{2i+1}).$$
 It is a check that 
$$\Ker(\phi_{\varphi})=\{\sum_{i\geq 0}a_i x^i: a_i=0\text{ for all }i\text{ odd}\}.$$
On the other hand, note that the map $\phi_{\varphi}\colon k[x]\to k[x]_x$ is surjective, because a polynomial $p(x)$ of degree $n$ satisfies that $\varphi^n(p(x))$ only has non-zero terms in even degree, hence it lives in the image of $\varphi^{n+s}$ for any $s\geq 0$. In particular, we have an exact sequence
$$0\to \Ker(\phi_{\varphi})\to k[x]\to k[x]_x\to 0.$$
This exact sequence is split and the pointwise Drazin inverse inducing the decomposition is 
$$\varphi^d(\sum_{i\geq 0}a_ix^i)=\sum_{i\geq 0}\dfrac{1}{2i+2}a_{2i+1}x^{2i+1}.$$			
Note that $\varphi$ is not core-nilpotent. However, the pointwise core-nilpotent decomposition of $\varphi$ is $\varphi=\varphi_1+\varphi_2$ with
\begin{itemize}
\item $\varphi_1\bigg(\sum_{i\geq 0}a_ix^i\bigg)=\sum_{i\geq 0}(2i+2)a_{2i+1}x^{2i+1};$
\item $\varphi_2\bigg(\sum_{i\geq 0}a_ix^i\bigg)=\sum_{i\geq 0}a_{2i+2}x^{2i}.$
\end{itemize}
\end{exam}

\section{Application to Core-Nilpotent Endomorphisms}
A sufficient condition for the section $\sigma$ of the previous part of this article to exist is $\varphi$ being a core-nilpotent endomorphism. In this case, $\varphi^d\equiv \varphi^D$ becomes  the Drazin inverse of $\varphi$ and the splitting of the localization exact squence yields the  AST decomposition of a core-nilpotent endomorphism (recall Definition \ref{D: CN}).

\begin{lem}\label{L: C-N}
If $\varphi \in \ed_k(V)$ is a core-nilpotent endomorphism of index $m$, the exact sequence $$0\to\Ker(\phi)\to V\to \Img(\phi)\to 0$$ is naturally isomorphic to 
$$0\to\Ker(\varphi^m)\to V\overset{\varphi^m}{\to} \Img(\varphi^m)\to 0$$
and, furthermore, it admits a unique section of $k[x]$-modules $\sigma\colon\Img(\varphi^m)\to V$
$$\sigma=j\circ ((\varphi^m)_{|\Img(\varphi^m)})^{-1}$$
where $j\colon \Img(\varphi^m)\hookrightarrow V$ is the natural inclusion and $(\varphi^m)_{|\Img(\varphi^m)}$ is
$$(\varphi^m)_{|\Img(\varphi^m)}\colon\Img(\varphi^m)\overset{\sim}{\to} \Img(\varphi^{2m})=\Img(\varphi^m).$$
\begin{proof}
The first part follows straightforwardly from the descriptions of $\Ker(\phi)$ and $\Img(\phi)$ given in Propositions \ref{P: Nuc y Loc} and \ref{P: Colim Img}. The $\sigma$ defined in the statement is clearly a section and a morphism of $k[x]$-modules because it is a composition of morphisms of $k[x]$-modules. Note that it is unique by Theorem \ref{T: general} (and its Corollary).
\end{proof}
\end{lem}
\begin{rem}
Explicitly on elements, the section $\sigma$ is
$$\sigma(\varphi^m(v))=\varphi^{m}(w)$$
with $w$ being some vector such that $\varphi^{2m}(w)=\varphi^m(v)$. Note that $w$ may not be unique, but $\varphi^m(w)$ is.
\end{rem}

\begin{prop}
If $\varphi \in \ed_k(V)$ is a core-nilpotent endomorphism of index $m$, then $\varphi^d$ satisfies
$$\varphi^{m+1}\circ\varphi^d=\varphi^m.$$
In particular, $\varphi^d$ is the Drazin inverse of $\varphi$, that we shall denote $\varphi^D$.
\begin{proof}
Indeed, under the identification $V_x\simeq \Img(\varphi^m),$ $\phi$ becomes isomorphic to $\varphi^m$, hence the third condition of Theorem \ref{T: local drazin} is exactly the condition of the statement.
\end{proof}
\end{prop}

We can summarize the results in the following statement:

\begin{thm}[\textbf{Characterization of CN-endomorphisms}]\label{T: CaractViaLocCN}
Let $\varphi\colon V\to V$ be an endomorphism. The following statements are equivalent:
\begin{itemize}
\item[I)] $\varphi$ is a Core-Nilpotent endomorphism (Definition \ref{D: CN}). 

\item[II)] The morphism $\phi\colon V\to V_x$ admits a (unique) section of $k[x]$-modules and $\varphi_{|\Ker(\phi)}$ is nilpotent.

\item[III)] There is a (unique) $\varphi$-invariant decomposition $V=U\oplus W$ with $\varphi_{|U}$ being nilpotent and $\varphi_{|W}$ being an isomorphism. 

\item[IV)] There is a (unique) decomposition $\varphi=\varphi_1+\varphi_2$ with $\varphi_2$ being nilpotent and $i(\varphi_1)\leq 1$ and such that $\varphi_1\circ\varphi_2=0=\varphi_2\circ\varphi_1$. 

\item[V)] $\varphi$ admits a (unique) Drazin inverse.
\end{itemize}
\begin{proof}
All the equivalences aside from II) are well-known from \cite{Fpa-CN}. It is clear from Lemma \ref{L: C-N} that I) implies II). We conclude by seeing that II) implies III). Indeed, the fact that $\phi$ admits a section implies that it is surjective and that the exact sequence 
$$0\to\Ker(\phi)\to V\to V_x\to 0$$
splits in a unique way with $V=U\oplus W$, where $U=\Ker(\phi)$ and $W$ is its corresponding supplementary submodule. By Theorem \ref{T: general}, $\varphi_{|W}$ is an isomorphism and, by hypothesis, $\varphi_{|U}$ is nilpotent. 
\end{proof}
\end{thm}




\medskip











 



\begin{thebibliography}{MMMM}




\bibitem{AST}  Argerami, M., Szechtman, F., Tifenbach, R.; \textit{On Tate's trace}, Linear Multilinear Algebra 55(6),515-520,(2007).


\bibitem{Ati} Atiyah, M.F., Macdonald, I.G.; \textit{Introduction to Commutative Algebra}, Addison-Wesley Publishing Company, (1969). 

\bibitem{Camp}  Campbell S.L.; \textit{The Drazin inverse of an infinite matrix}, SIAMJ Appl Math, 31, 492–503, (1976).

\bibitem{Dra} Drazin, M.P.; \textit{Pseudo-inverses in associative rings and semigroups}, Amer Math Monthly, 65(7), 506–514, (1958).

\bibitem{Gen CN} Karantha, M.P., Varkady, S.; \textit{ Generalized core-nilpotent decomposition}, J Anal 29, 543–550, (2021). 


\bibitem{Ind}  Mitra, S.K., Bhimasankaram, P., Malik, S.B.;\textit{Matrix Partial Orders, Shorted Operators and Applications}, World Scientific, (2010).


\bibitem{Pa-CN} Pablos Romo, F.; \textit{Core-Nilpotent Decomposition and new generalized inverses  of  Finite Potent Endomorphisms}, Linear Multilinear Algebra 68(11), 2254-2275, (2020).

\bibitem{Fpa-CN} Pablos Romo, F.; \textit{Core-Nilpotent Endomorphisms of Infinite-Dimensional Vector Spaces}, Mediterr. J. Math. 18(84), (2021). 

\bibitem{Graphs} Panelo, Cristian., Jaume, Daniel A.,  Machado Toledo, Maikon.; \textit{ On the Core-Nilpotent Decomposition of Threshold Graphs}, Panelo, Cristian and Jaume, Daniel A. and Machado Toledo, Maikon, SSRN: https://ssrn.com/abstract=5316214, (2025).


\bibitem{Ta}  Tate, J. \textit{Residues of Differentials on Curves}, Ann. Scient. \'Ec.
Norm. Sup. 1,4a s\'erie, 149-159,(1968).




\end{thebibliography}
\end{document}